\numberwithin{equation}{section}
\newtheorem*{thm*}{Theorem}
\newtheorem*{prob*}{Problem}
\newtheorem*{lem*}{Lemma}
\newtheorem*{iprob*}{Problem}
\theoremstyle{definition}
\newtheorem*{defi*}{Definition}
\newtheorem*{acks*}{Acknowledgements}
\newcommand{\ZZ}{\mathbf{Z}}
\newcommand{\PP}{\mathbf{P}}
\newcommand{\RR}{\mathbf{R}}
\newcommand{\NN}{\mathbf{N}}
\newcommand{\CC}{\mathbf{C}}
\newcommand{\QQ}{\mathbf{Q}}
\newcommand{\SL}{\mathbf{SL}}
\newcommand{\se}{\subseteq}
\newcommand{\inv}{^{-1}}
\newcommand{\lra}{\longrightarrow}
\newcommand{\one}{\boldsymbol{1}}
\DeclareMathOperator{\prim}{Prim}
\newcommand{\tto}{\twoheadrightarrow}
\newcommand{\cstar}{\mathrm{C}^*}
\newcommand{\cmax}{\cstar_{\mathrm{max}}}
\newcommand{\cred}{\cstar_{\mathrm{red}}}
\title[A family of exotic group C*-algebras]{A family of exotic group C*-algebras}
\author[Maria Gerasimova]{Maria Gerasimova}
\author[Nicolas Monod]{Nicolas Monod}
\address{M.G.: University of Münster, Mathematical Institute, Einsteinstraße 62, 48149 Münster, Germany}
\email{mari9gerasimova@mail.ru}
\address{N.M.: \'Ecole Polytechnique Fédérale de Lausanne (EPFL), CH–1015 Lausanne, Switzerland}
\email{nicolas.monod@epfl.ch}
\begin{document}

\begin{abstract}
We show that a large family of groups without non-abelian free subgroups satisfy the following strengthening of non-amenability: they each have a rich supply of irreducible representations defining exotic C*-algebras. The construction is explicit.
\end{abstract}
\maketitle



\section{Introduction}
\subsection{Group C*-algebras}
Let $\Gamma$ be any group; in this note, we consider groups without topology. Two C*-algebras are canonically attached to $\Gamma$: the \textbf{maximal C*-algebra} $\cmax(\Gamma)$ and the \textbf{reduced C*-algebra} $\cred(\Gamma)$. Moreover, there is a canonical epimorphism $\cmax(\Gamma)\to \cred(\Gamma)$, which is an isomorphism if and only if $\Gamma$ is amenable.

Following~\cite{Kyed-Soltan}, \cite{Kaliszewski-Landstad-Quigg13}, \cite{Kaliszewski-Landstad-Quigg16}, \cite{Wiersma2016} and~\cite{Ruan-Wiersma}, a C*-algebra $A$ is called \textbf{exotic} if it lies strictly inbetween $\cmax(\Gamma)$ and $\cred(\Gamma)$. That is, if $\cmax(\Gamma)\to \cred(\Gamma)$ factors through two epimorphisms
\[
\cmax(\Gamma) \lra A \lra \cred(\Gamma)
\]
and neither of them is an isomorphism. Thus, the existence of an exotic C*-algebra is a refinement of the non-amenability of $\Gamma$.

The purpose of this note is to describe an explicit situation where $\Gamma$ admits an uncountable family of different exotic C*-algebras. Moreover, these algebras are defined by concrete \emph{irreducible} representations of $\Gamma$. Specifically, we consider the quasi-regular representation algebras associated to uncountably many suitable subgroups of $\Gamma$.

In particular, we obtain an explicit and simple parametrization of a huge set in the \textbf{primitive dual} $\prim(\Gamma)$ of the group $\Gamma$, reflecting layers upon layers of non-amenability in this dual. Moreover, in our situation $\Gamma$ is known to be \textbf{C*-simple}, which by definition means that $\cred(\Gamma)$ is simple, i.e. that the interval between $\cmax(\Gamma)$ and $\cred(\Gamma)$ is a maximal interval.

Of particular interest is the fact that our groups $\Gamma$ do not contain non-abelian free subgroups. First, since exotic algebras consitute a refinment of non-amenability, the von Neumann--Day problem naturally challenges us to find such examples. Secondly, some early constructions of exotic algebras (\cite{Brown-Guentner2013}, \cite{Okayasu14}) were precisely based on the analytical properties of non-abelian free groups and subgroups, namely on so-called $L^p$-representations (cf.\ also \cite{deLaat-Siebenand}).

\medskip
Very different examples can already be found in~\cite{Bekka-Kaniuth-Lau-Schlichting} and~\cite{Bekka99}; as they are based on non-solvable Lie groups and respectively their lattices, they happen to contain non-abelian free subgroups too. A rather different approach with remarkable properties of quasi-regular C*-algebras can be found in~\cite{Bekka-Kalantar}. Finally, a completely general observation from~\cite[Rem.~2.2]{Echterhoff-Quigg} is that whenever $\Gamma$ is a non-amenable non-Kazhdan group, the (very much non-irreducible) representation $\lambda_\Gamma\oplus\one$ generates an exotic C*-algebra $A$. Indeed, $A$ maps onto $\cred(\Gamma)$ by construction. This projection is non-isomorphic since $\one$ is not weakly contained in $\lambda_\Gamma$ by non-amenability (the Hulanicki--Reiter criterion). The fact that $A\cong \cmax(\Gamma)$ would imply Kazhdan's property is the so-called ``Kazhdan projection'' criterion, see~\cite[Lem.~3.1]{Valette84}.

\subsection{A family of groups}
Let $S\se \NN$ be any set of prime numbers and denote by $\ZZ[1/S]$ the ring of $S$-integers. Following~\cite{Monod_PNAS,Monod_piecewise_pre}, we consider the group $\Gamma_S$ of all piecewise-$\SL_2(\ZZ[1/S])$ homeomorphisms of the real line $\RR$. More precisely, $\Gamma_S$ consists of all homeomorphisms $g$ for which $\RR$ can be cut into finitely many intervals such that, on each interval, $g$ coincides with a projective transformation $x\mapsto \dfrac{a x+b}{c x+ d}$ for some $\begin{pmatrix}a & b\\ c & d\end{pmatrix}$ in $\SL_2(\ZZ[1/S])$.

We recall that $\Gamma_S$ is a ``free group free group''; in the notation of~\cite{Monod_PNAS}, $\Gamma_S=H(\ZZ[1/S])$, except for $S=\varnothing$ where the breakpoint conditions chosen in~\cite{Monod_PNAS} define a smaller group, see~\cite[\S2]{Monod_piecewise_pre}.

\medskip

If we fix $S$, any subset $T\se S$ provides a subgroup $\Gamma_T< \Gamma_S$. Starting with $S$ infinite, this is an uncountable family of subgroups of $\Gamma_S$. This poset, the collection of all subsets $T\se S$, will parametrise the region of interest in the primitive dual $\prim(\Gamma_S)$ under the map $T \mapsto \cstar(\lambda_{\Gamma_S/\Gamma_T})$ which takes $T$ to the C*-algebra generated by the quasi-regular representation $\lambda_{\Gamma_S/\Gamma_T}$ of $\Gamma_S$ associated to $\Gamma_T$. In other words, $\lambda_{\Gamma_S/\Gamma_T}$ is the $\Gamma_S$-representation induced from the trivial $\Gamma_T$-representation.

\begin{thm*}\label{thm:main}
For any non-empty sets $T\subsetneqq S$ of prime numbers,  we have an exotic algebra
\[
\cmax(\Gamma_S) \xrightarrow{\ \not\simeq\ } \cstar(\lambda_{\Gamma_S/\Gamma_T}) \xrightarrow{\ \not\simeq\ } \cred(\Gamma_S)
\]
with $\lambda_{\Gamma_S/\Gamma_T}$ irreducible and $\cred(\Gamma_S)$ simple.

Moreover, given $S$, the corresponding quotients $\cmax(\Gamma_S) \tto \cstar(\lambda_{\Gamma_S/\Gamma_T})$ are pairwise non-isomorphic as $T$ varies.
\end{thm*}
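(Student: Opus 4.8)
My plan is to read ``pairwise non-isomorphic'' in the natural sense for quotients of a fixed algebra: the surjections $\cmax(\Gamma_S)\tto\cstar(\lambda_{\Gamma_S/\Gamma_T})$ and $\cmax(\Gamma_S)\tto\cstar(\lambda_{\Gamma_S/\Gamma_{T'}})$ are the same quotient exactly when they have the same kernel, i.e.\ when $\lambda_{\Gamma_S/\Gamma_T}$ and $\lambda_{\Gamma_S/\Gamma_{T'}}$ are weakly equivalent (this is also precisely the assertion that they determine the same point of $\prim(\Gamma_S)$). So, given $T\ne T'$, I would fix — after possibly swapping the two sets — a prime $p\in T\setminus T'$, and establish the one-sided statement that $\lambda_{\Gamma_S/\Gamma_T}$ is \emph{not} weakly contained in $\lambda_{\Gamma_S/\Gamma_{T'}}$.

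The second step localises everything at $p$. Since $p\in T$ we have $\Gamma_{\{p\}}\le\Gamma_T$, and the base point $e\Gamma_T\in\Gamma_S/\Gamma_T$ is fixed by $\Gamma_{\{p\}}$; hence the trivial representation $\one_{\Gamma_{\{p\}}}$ occurs as a subrepresentation of the restriction of $\lambda_{\Gamma_S/\Gamma_T}$ to $\Gamma_{\{p\}}$. As weak containment passes to restrictions and to subrepresentations, a weak containment $\lambda_{\Gamma_S/\Gamma_T}\prec\lambda_{\Gamma_S/\Gamma_{T'}}$ would force $\one_{\Gamma_{\{p\}}}\prec\lambda_{\Gamma_S/\Gamma_{T'}}\big|_{\Gamma_{\{p\}}}$, i.e.\ it would give $\Gamma_{\{p\}}$ almost invariant unit vectors in $\ell^2(\Gamma_S/\Gamma_{T'})$. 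It is therefore enough to rule this out. Cutting $\Gamma_S/\Gamma_{T'}$ into $\Gamma_{\{p\}}$-orbits, the standard description of the restriction of an induced representation gives $\lambda_{\Gamma_S/\Gamma_{T'}}\big|_{\Gamma_{\{p\}}}\cong\bigoplus_g\lambda_{\Gamma_{\{p\}}/A_g}$, the sum over $\Gamma_{\{p\}}\backslash\Gamma_S/\Gamma_{T'}$ and $A_g=\Gamma_{\{p\}}\cap g\Gamma_{T'}g^{-1}$.

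Everything then reduces to a single claim, which I expect to be the main obstacle: \emph{each $A_g$ is amenable}. Granting it, the rest is formal. An amenable subgroup $A\le G$ always satisfies $\lambda_{G/A}\prec\lambda_G$: a Følner sequence in $A$ produces, through the normalised indicator functions of its terms, unit vectors in $\ell^2(G)$ whose diagonal matrix coefficients converge pointwise to $\one_A$, which is the defining positive-definite function of the cyclic representation $\lambda_{G/A}$. Applying this to $G=\Gamma_{\{p\}}$ and each $A=A_g$, and using that a direct sum of representations is weakly contained in a \emph{fixed} representation as soon as each summand is, we obtain $\lambda_{\Gamma_S/\Gamma_{T'}}\big|_{\Gamma_{\{p\}}}\prec\lambda_{\Gamma_{\{p\}}}$. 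But $\Gamma_{\{p\}}$ is non-amenable \cite{Monod_PNAS}, so $\one_{\Gamma_{\{p\}}}\not\prec\lambda_{\Gamma_{\{p\}}}$ by the Hulanicki--Reiter criterion, whence $\one_{\Gamma_{\{p\}}}\not\prec\lambda_{\Gamma_S/\Gamma_{T'}}\big|_{\Gamma_{\{p\}}}$ — the conclusion we wanted. The amenability of $A_g$ is where the work lies: since $p\notin T'$ we have $\ZZ[1/T']\subseteq\ZZ_p$, so every element of $g\Gamma_{T'}g^{-1}$, and in particular of $A_g$, has only $p$-adically bounded germ data — for $g=e$ this is transparent, $A_e=\Gamma_{\{p\}}\cap\Gamma_{T'}$ being a piecewise-$\SL_2(\ZZ)$ group, essentially $\Gamma_\varnothing$. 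Hence $A_g$ ``sees no $p$-adic expansion'' and falls within the tame, non-dense regime of these piecewise-projective groups, which is amenable. Making this precise uniformly in $g$ — controlling the conjugates $g\Gamma_{T'}g^{-1}$ finely enough to see that their intersections with $\Gamma_{\{p\}}$ are amenable — is the heart of the proof; the surrounding C*-algebra is just bookkeeping.
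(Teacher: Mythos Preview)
Your outline is structurally the same as the paper's: restrict both quasi-regular representations to a subgroup fixing the base coset (the paper uses all of $\Gamma_T$, you pass to $\Gamma_{\{p\}}\le\Gamma_T$), observe that the trivial representation sits inside the restriction of $\lambda_{\Gamma_S/\Gamma_T}$, and deduce that a weak containment $\lambda_{\Gamma_S/\Gamma_T}\prec\lambda_{\Gamma_S/\Gamma_{T'}}$ would force $\one\prec\lambda_{\Gamma_S/\Gamma_{T'}}\big|_{\Gamma_{\{p\}}}$. In the paper's language this says that $\Gamma_{T'}$ would be co-amenable to $\Gamma_{\{p\}}$ (a fortiori to $\Gamma_T$) relative to $\Gamma_S$, and the paper simply invokes~\cite{Monod_piecewise_pre}, where this relative co-amenability is shown to fail.

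The genuine gap is in your proposed mechanism for that failure. You want every $A_g=\Gamma_{\{p\}}\cap g\Gamma_{T'}g^{-1}$ to be amenable, and you describe the case $g=e$ as ``transparent'' because $A_e$ is the piecewise-$\SL_2(\ZZ)$ group~$\Gamma_\varnothing$. But the amenability of $\Gamma_\varnothing$ is \emph{not} established: all the non-amenability arguments for the groups $\Gamma_S$ hinge on $\ZZ[1/S]$ being dense in $\RR$, which fails for $S=\varnothing$, and there is no amenability proof on offer either --- this sits squarely among the unresolved Thompson-type questions. So your ``transparent'' base case is already not known, and the heuristic that $p$-adically bounded germ data places $A_g$ in a ``tame, amenable regime'' is not an argument. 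The result in~\cite{Monod_piecewise_pre} that the paper cites is \emph{not} obtained by proving these stabilisers amenable; it proceeds by a different route. Your surrounding C*-bookkeeping is correct, but the step you yourself flag as ``the heart of the proof'' is not merely deferred work --- as stated it may be false, and in any case it is not how the needed input is proved. (Note also that your proposal addresses only the ``moreover'' clause; exoticness, irreducibility and simplicity require the separate arguments given in the paper.)
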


Since there is a correspondance between (non-degenerate) representations of $\cmax(\Gamma_S)$ and unitary $\Gamma_S$-representations~\cite[\S13]{Dixmier6996_C}, all the above statements on group C*-algebras can be reformulated in terms of weak containment and weak inequivalence of various $\Gamma_S$-representations. For instance, the last statement means that the various $\lambda_{\Gamma_S/\Gamma_T}$ are pairwise not weakly equivalent. 

In the opposite direction, the only part of the Theorem that we stated in terms of $\Gamma_S$-representations is the irreducibility, which alows us to view $\lambda_{\Gamma_S/\Gamma_T}$ as points in the unitary dual $\widehat \Gamma_S$. In the C* language, this amounts to saying that the ideal of $\cmax(\Gamma_S)$ defining $\cstar(\lambda_{\Gamma_S/\Gamma_T})$ is a \emph{primitive} ideal.

In conclusion, we have faithfully embedded the entire collection subsets $T\se S$ into the primitive dual $\prim(\Gamma_S)$ of $\Gamma_S$ and a fortiori in the unitary dual $\widehat \Gamma_S$ since $\prim(\Gamma_S)$ can be viewed as the Kolmogorov $T_0$-quotient of $\widehat \Gamma_S$. Moreover, this region of $\prim(\Gamma_S)$ consists entirely of exotic group C*-algebra of $\Gamma_S$.

\section{Proof of the Theorem}
\label{sec:proof}
We begin by recording a general property of piecewise-projective groups; similar facts were already observed in~\cite{Monod_PNAS}, \cite{Burillo-Lodha-Reeves} and~\cite{Brum-MatteBon-Rivas-Triestino_arx}. Given any ring $A<\RR$, we denote by $H_c$ the subgroup of \emph{compactly supported} piecewise-$\SL_2(A)$ homeomorphisms of $\RR$ and by $H'_c$ the derived subgroup of $H_c$.

\begin{lem*}
For any $h_0\in \SL_2(A)$ and any compact interval $I\se \RR$ with $\infty\notin h_0 I$, there is $h\in H'_c$ such that $h$ and $h_0$ coincide on $I$.
\end{lem*}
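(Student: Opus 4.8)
The plan is to construct $h$ in two stages: first exhibit \emph{some} compactly supported piecewise-$\SL_2(A)$ homeomorphism agreeing with $h_0$ on $I$, and then push it into the derived subgroup $H'_c$ by a commutator trick that leaves its behaviour on $I$ untouched. The hypothesis $\infty\notin h_0 I$ is needed already for the statement to make sense: $h_0$ maps $I$ into $\RR$ precisely when $\infty\notin h_0 I$, and then $h_0(I)$ is a bounded interval, so that $h_0|_I$ can be extended to a homeomorphism of $\RR$ with compact support.

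\emph{Stage 1 (flexibility).} Since $\PP^1(A)$ is dense in $\PP^1(\RR)$, and since $h_0\inv(\infty)\notin I$ (equivalently $\infty\notin h_0 I$) with $I$ compact, I would enlarge $I$ to a compact interval $[a,b]$ with $a,b\in\PP^1(A)\cap\RR$, chosen close enough to $I$ that still $h_0\inv(\infty)\notin[a,b]$ and $h_0$ is monotone on $[a,b]$; then $h_0([a,b])$ is again a bounded interval with endpoints in $\PP^1(A)$. The task becomes to produce $u\in H_c$ equal to $h_0$ on $[a,b]$ and equal to the identity outside a slightly larger compact interval $[a',b']$. This is the flexibility of piecewise-projective groups already used in \cite{Monod_PNAS}, \cite{Burillo-Lodha-Reeves} and \cite{Brum-MatteBon-Rivas-Triestino_arx}: one factors $h_0$ into elementary matrices of $\SL_2(A)$ (classical for $A=\ZZ[1/S]$; note also that $\SL_2(A)$ contains the integer translations $\begin{pmatrix}1&n\\0&1\end{pmatrix}$ and all their conjugates) and interpolates from $h_0$ down to the identity using finitely many projective pieces drawn from $\SL_2(A)$, with every breakpoint chosen in $\PP^1(A)$ and every intermediate value in a suitable orbit of a point stabiliser. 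I expect this bookkeeping to be the main obstacle: the pieces must honestly lie in $\SL_2(A)$, not merely in $\SL_2(\RR)$, so the auxiliary rational points and the factorisation must be arranged compatibly (for instance so that no partial product of the factorisation carries $[a,b]$ across $\infty$).

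\emph{Stage 2 (commutator upgrade).} Now $u\in H_c$ is supported in $[a',b']\supseteq[a,b]\supseteq I$. Using the same flexibility in its most elementary guise — merely sliding an interval — I would choose an increasing $v\in H_c$ carrying $[a',b']$ into a half-line disjoint from $I$, say $v([a',b'])\se(-\infty,\min I)$. Put $h:=[u,v]=uvu\inv v\inv$; since $u,v\in H_c$, this lies in $H'_c$ with no further argument. To check $h=h_0$ on $I$: for $x\in I\se[a',b']$ we have $v(b')<\min I\le x$, hence $v\inv(x)>b'$, so $v\inv(x)$ lies outside $[a',b']$; there $u$ is the identity, so $u\inv v\inv(x)=v\inv(x)$, then $v$ returns it to $x$, and finally $u(x)=h_0(x)$ since $x\in[a,b]$. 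Thus $h\in H'_c$ and $h=h_0$ on $I$. This last stage is purely formal once Stage 1 is available, and it is exactly what promotes membership from $H_c$ to $H'_c$.
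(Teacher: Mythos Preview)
Your two-stage architecture matches the paper exactly: first produce some $u\in H_c$ agreeing with $h_0$ on $I$, then upgrade to $H'_c$ by a commutator with an element that pushes the support of $u$ off $I$. Your Stage~2 computation is correct and is essentially identical to the paper's (the paper phrases it as $b_1\inv J\cap J=\varnothing$ and observes that $b_1 h_1\inv b_1\inv$ is trivial on $I$, which is your $vu\inv v\inv|_I=\Id$). The paper also obtains the displacing element $v$ (their $b_1$) by reapplying Stage~1 to a large translation in $\SL_2(A)$, exactly as you suggest.

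The difference, and the gap, is Stage~1. You outline a route via elementary-matrix factorisation of $h_0$ and piecewise interpolation, openly flagging the bookkeeping (keeping every partial product away from $\infty$, choosing compatible $A$-rational breakpoints) as the main obstacle --- and then you do not carry it out. So Stage~1 in your proposal is a plan, not a proof. The paper bypasses all of this with a short direct construction: to extend $h_0|_{[u,v]}$ past $v$, it picks a hyperbolic $q\in\SL_2(A)$ whose attracting/repelling fixed points $\xi_\pm$ are placed (by density of fixed-point pairs already for $\SL_2(\ZZ)$) so that the graphs of $h_0$ and of a suitable power of $q$ must cross at some $t$ just beyond $v$; then $h_1$ is defined by $h_0$ on $[v,t]$, by $q$ on $[t,\xi_\pm]$, and by the identity beyond. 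Two cases ($h_0 v\gtrless v$) and a symmetric argument on the left finish it. This avoids any factorisation of $h_0$ and any need to control where intermediate products send $I$; only one auxiliary matrix $q$ is used on each side, and the breakpoints $t,\xi_\pm$ are automatically admissible for $H(A)$. Your factorisation approach could presumably be made to work, but it is strictly more laborious, and as written it is incomplete.
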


\begin{proof}
We first claim that there is $h_1\in H_c$ coinciding with $h_0$ on $I$. Write $I=[u,v]$; we shall construct $h_1$ on $[v, +\infty)$ with $h_1 v = h_0 v$, and then the same argument can be applied on $(-\infty, u]$ to complete the claim. If $h_0$ fixes $v$, we can continue with the identity. If $h_0 v > v$, we can pick $x\in (v, h_0 v)$ close enough to $v$ that $h_0 x \in (h_0 v, +\infty)$. We can choose a hyperbolic element $q \in \SL_2(A)$ whose repelling/attracting fixed points $\xi_-, \xi_+$ are respectively in $(v, x)$ and in $(h_0 x, +\infty)$. Indeed already for $\SL_2(\ZZ)$ the pairs of fixed points are dense in $\PP^1 \times \PP^1$. Upon replacing $q$ by a positive power of itself, $q x > h_0 x$. Since on the other hand $q \xi_- = \xi_- < h_0 v< h_0 \xi_-$, there is by continuity some $t\in(\xi_-, x)$ with $q t = h_0 t$. We can now define $h_1$ by $h_0$ on $[v,t]$, by $q$ on $[t, \xi_+]$ and by the identity on $[\xi_+, +\infty)$.

The case $h_0 v < v$ is analogous: pick $x>v$ close enough that $h_0 x \in (h_0 v, v)$ and choose $q$ such that $\xi_-\in(v, x)$ and $\xi_+\in (-\infty, h_0 v)$. Replacing $q$ by a suitable power, we have $q v < h_0 v$ but on the other hand  $q \xi_- = \xi_- > v > h_0 x > h_0 \xi_-$. Thus there is $t\in(v, \xi_-)$ with $q t = h_0 t$ and we define $h_1$ by $h_0$ on $[v,t]$, by $q$ on $[t, \xi_-]$ and by the identity on $[\xi_-, +\infty)$. The claim is established.

Let now $J$ be a compact interval containing $I$ and the support of $h_1$. We can choose an element $b_1\in H_c$ with $b_1\inv  J\cap J =\varnothing$; this exists e.g. by another application of the first claim, this time for $J$ and a translation $b_0\in \SL_2(A)$ that translates the left endpoint of $J$ past its right endpoint. Then $b_1 h_1\inv b_1\inv$ is trivial on $I$ and hence the commutator $h=h_1 b_1 h_1\inv b_1\inv$ in $H'_c$ has the desired properties.
\end{proof}

\subsection{Irreducibility}
\label{sec:irred}
It is well-known that the representation $\lambda_{\Gamma_S/\Gamma_T}$ is irreducible if and only if $\Gamma_T$ is self-commensurating in $\Gamma_S$. This is generally attributed to Mackey as it follows from Theorem~6' in~\cite{Mackey51}; we note that it was already proved by Godement in Appendice~A p.~80 of~\cite{Godement48}.

Thus, given any element $g\in \Gamma_S$ not in $\Gamma_T$, we need to show that $\Gamma_T^g \cap \Gamma_T$ does not have finite index in both $\Gamma_T$ and the conjugate $\Gamma_T^g$.

To this end, it suffices to find a subgroup $\Lambda<\Gamma_T$ without proper finite index subgroups, e.g. infinite and \emph{simple}, such that $\Lambda^g$ is not in $\Gamma_T$. We now proceed to show that the second derived subgroup $\Lambda=(\Gamma_T)''$ has the required properties.

The simplicity of $H(A)''$ and the identity $H(A)''=H(A)_c'$ hold for any ring $A<\RR$, see~\cite{Burillo-Lodha-Reeves}. In fact, all this holds more generally for all ``locally moving'' groups of homeomorphisms of $\RR$, see~\cite[\S4]{Brum-MatteBon-Rivas-Triestino_arx}. In our case, $\Gamma_T=H(A)$ with $A=\ZZ[1/T]$.

Since $g\notin \Gamma_T$, there is an interval $J$ on which $g$ is represented by a matrix $\begin{pmatrix}a & b\\ c&d\end{pmatrix}$ in $\SL_2(\ZZ[1/S])$ with at least one coefficient not in $A$. That is, some coefficient contains a negative power of some $p\in S\smallsetminus T$.

Suppose first that this coefficient is $a$ or $b$. Choose any $q\in T$ and select $n\in\NN$ large enough so that the matrix $h_0=\begin{pmatrix}1 & 0\\ q^{-n}&1\end{pmatrix}$ satisfies $h_0 I \se \mathrm{Int}(g J)$ for some compact interval $I\se g J$. This is possible since $h_0$ converges to the identity as $n\to+\infty$. Applying the Lemma, we obtain $h \in \Lambda$ given by $h_0$ on $I$. The conjugate $g\inv h g$ is given on $g\inv I\se J$ by a matrix of the form
\[
\begin{pmatrix}a & b\\ c&d\end{pmatrix}\inv \begin{pmatrix}1 & 0\\ q^{-n}&1\end{pmatrix} \begin{pmatrix}a & b\\ c&d\end{pmatrix} = \begin{pmatrix}* & -b^2 q^{-n}\\ a^2 q^{-n}&*\end{pmatrix}.
\]
Thus the negative power of $p$ is still present in that case and hence $g\inv h g$ is not in $\Gamma_T$.

If the coefficient is $c$ or $d$, then we argue similarly with $h_0=\begin{pmatrix}1 & q^{-n}\\ 0&1\end{pmatrix}$ and this time the conjugate $g\inv h_ g$ involves a matrix $\begin{pmatrix}* & d^2 q^{-n}\\ -c^2 q^{-n}&*\end{pmatrix}$, and hence again is not in $\Gamma_T$. This completes the proof of irreducibility.

\subsection{Unconfinment}
\label{sec:unconfinment}
Let $\Lambda<\Gamma$ be a subgroup of a group $\Gamma$. Following~\cite{Hartley-Zalesskii93}, recall that $\Lambda$ is called \textbf{unconfined} in $\Gamma$ if the closure in the Chabauty space of subgroups of $\Gamma$ of the conjugation $\Gamma$-orbit of $\Lambda$ contains the trivial subgroup. Explicitly, this simply means that for any finite subset $E\se \Gamma$ not containing the identity, there is $\gamma\in\Gamma$ such that the conjugate $\gamma \Lambda \gamma\inv$ does not meet $E$.

The relevance of this notion to our situation is that it implies that the quasi-regular representation $\lambda_{\Gamma/\Lambda}$ weakly contains the regular representation $\lambda_{\Gamma}$. Indeed, this follows from Fell's continuity of the induction map, see Theorem~4.2 in~\cite{Fell1964}.

\medskip

Given $T\subsetneqq S$, we shall now prove that $\Gamma_T$ is unconfined in $\Gamma_S$. Equivalently, we produce a sequence $(g_n)$ in $\Gamma_S$ such that for every non-trivial $h\in \Gamma_S$, the conjugate $g_n\inv h g_n$ is outside $\Gamma_T$ for all $n$ large enough (depending on $h$).

To this end, let $p\in S \smallsetminus T$ and define $g_n$ by the element $\begin{pmatrix}p^n & p^{-2 n}\\ 0&p^{-n}\end{pmatrix}$ of $\SL_2(\ZZ[1/S])$. Note that $g_n$ fixes $\infty$ and hence defines an element of $\Gamma_S$. Consider now any non-trivial $h\in \Gamma_S$. There is some interval $I\se \RR$ on which $h$ is represented by an element $h_I=\begin{pmatrix}a & b\\ c&d\end{pmatrix}$ of $\SL_2(\ZZ[1/S])$ which is not $\pm\mathrm{Id}$. On $g_n\inv I$, the conjugate $g_n\inv h g_n$ is represented by the conjugate of $h_I$, whose top right corner is computed to be
\[
b p^{-2n}  + (a-d) p^{-3n} - c p^{-4n}.
\]
If this number were in $\ZZ[1/T]$ for arbitrarily large $n$, then the three coefficients $b$, $a-d$ and $c$ would vanish because the exponents of $p^{-n}$ are different. Together with the determinant condition $ad-bc=1$, this implies $a^2=1$ and hence $h_I=\pm\mathrm{Id}$, contrary to our assumption.

\subsection{Inequivalence}
\label{sec:ineq}
We now justify that whenever $T, T'\se S$ are two different subsets, the quotients $\cmax(\Gamma_S) \tto \cstar(\lambda_{\Gamma_S/\Gamma_T})$ and $\cmax(\Gamma_S) \tto \cstar(\lambda_{\Gamma_S/\Gamma_{T'}})$ are non-isomorphic. Without loss of generality, we can assume $T\not\se T'$ and we shall verify the following more precise statement: there is no vertical morphism for which the following diagram commutes.
\[
\xymatrix{& &{\cstar(\lambda_{\Gamma_S/\Gamma_{T'}})}\ar@{-->}[dd]\\
{\cmax(\Gamma_S)} \ar@/^1pc/[rru] \ar@/_1pc/[rrd]& & \\
& &{\cstar(\lambda_{\Gamma_S/\Gamma_T})}}
\]
By the correspondance between representations of $\Gamma_S$ and of $\cmax(\Gamma_S)$, this is equivalent to the statement that $\lambda_{\Gamma_S/\Gamma_T}$ is not weakly contained in $\lambda_{\Gamma_S/\Gamma_{T'}}$.

Suppose for a contradiction that this weak containment holds. In particular, the restriction $(\lambda_{\Gamma_S/\Gamma_T})|_{\Gamma_T}$ to $\Gamma_T$ is weakly contained in the restriction $(\lambda_{\Gamma_S/\Gamma_{T'}})|_{\Gamma_T}$. But $(\lambda_{\Gamma_S/\Gamma_T})|_{\Gamma_T}$ contains the trivial $\Gamma_T$-representation. Thus this trivial representation is weakly contained in $(\lambda_{\Gamma_S/\Gamma_{T'}})|_{\Gamma_T}$. This is equivalent to stating that $\Gamma_{T'}$ is \emph{co-amenable to $\Gamma_{T}$ relative to $\Gamma_S$}, see~\cite{Monod_piecewise_pre}. However, it is proved  in~\cite{Monod_piecewise_pre} that this relative co-amenability does not hold, in fact not even relatively to the larger group $H(\QQ)$.

\subsection{End of proof}
The representation $\lambda_{\Gamma_S/\Gamma_T}$ defines a quotient $\cstar(\lambda_{\Gamma_S/\Gamma_T})$ of $\cmax(\Gamma_S)$. The fact that the canonical map $\cmax(\Gamma_S)\to \cred(\Gamma_S)$ factors through
\[
\cmax(\Gamma_S) \lra \cstar(\lambda_{\Gamma_S/\Gamma_T})\lra \cred(\Gamma_S) = \cstar(\lambda_{\Gamma_S})
\]
is equivalent to $\lambda_{\Gamma_S}$ being weakly contained in $\lambda_{\Gamma_S/\Gamma_T}$, which is established in \Cref{sec:unconfinment}.

For $\cstar(\lambda_{\Gamma_S/\Gamma_T})$ to be exotic, we need to know that neither of the above two morphisms is an isomorphism.

If the first morphism is an isomorphism, then $\cstar(\lambda_{\Gamma_S/\Gamma_T})$ has an epimorphism to the scalar algebra $\CC$ because $\cmax(\Gamma_S )$ admits such a morphism. This means that the trivial representation is weakly contained in $\lambda_{\Gamma_S/\Gamma_T}$, which happens if and only if $\Gamma_T$ is co-amenable in $\Gamma_S$, see~\cite[No.~3, \S2]{Eymard72}. However, it is shown in~\cite{Monod_piecewise_pre} that $\Gamma_T$ is not co-amenable in $\Gamma_S$.

The second morphism is an isomorphism if and only if $\lambda_{\Gamma_S/\Gamma_T}$ is weakly contained in $\lambda_{\Gamma_S}$. The latter holds if and only if $\Gamma_T$ is an amenable group, see Proposition~4.2.1 in~\cite{Anantharaman03}. (That reference requires another condition which is trivially satisfied in the current setting of discrete groups.) Thus, using the non-amenability established in~\cite{Monod_PNAS}, we conclude that indeed $\cstar(\lambda_{\Gamma_S/\Gamma_T})$ is exotic.

The fact that $\lambda_{\Gamma_S/\Gamma_T}$ is irreducible was proved above in \Cref{sec:irred} and the simplicity of $\cred(\Gamma_S)$ was established in~\cite{LeBoudec-MatteBon_C*s}. Finally, the inequivalence was proved in \Cref{sec:ineq}.\qed

\section{Comments}
If we only want a group $\Gamma$ without non-abelian free subgroups but admitting \emph{some} exotic group C*-algebra, then other easy examples from quasi-regular representations associated to subgroups $\Lambda<\Gamma$ can be constructed as follows. Of course, these simple examples will not enjoy the stronger properties listed in the Theorem, in particular the representations will be far from irreducible and therefore they do not describe anything in the dual or primitive dual of $\Gamma$.

For the reasons exposed in \Cref{sec:proof}, we will have an exotic algebra
\[
\cmax(\Gamma) \xrightarrow{\ \not\simeq\ } \cstar(\lambda_{\Gamma/\Lambda}) \xrightarrow{\ \not\simeq\ } \cred(\Gamma)
\]
provided the following three conditions are all satisfied:

\begin{enumerate}[(i)]
\item $\Lambda$ is not amenable;\label{pt:amen}
\item $\Lambda$ is unconfined in $\Gamma$;\label{pt:unco}
\item $\Lambda$ is not co-amenable in $\Gamma$.\label{pt:co-amen}
\end{enumerate}

\medskip
\noindent
Start with any non-amenable group $\Lambda$ without non-abelian free subgroups. Consider the ``lamplighter'' restricted wreath product
\[
\Gamma = \Lambda \wr Z = \Big( \bigoplus_{z\in Z} \Lambda_z \Big) \rtimes Z
\]
where $Z$ is any infinite group without non-abelian free subgroups; e.g.\ $Z=\ZZ$ or $Z=\Lambda$. Here $\Lambda_z$ denotes a copy of $\Lambda$ for each $z\in Z$. Note that $\Gamma$ still has no non-abelian free subgroups. View $\Lambda$ as a subgroup of $\Gamma$, say $\Lambda= \Lambda_e$ at the coordinate $e\in Z$.

\medskip
Condition~\eqref{pt:amen} holds by construction. For~\eqref{pt:unco}, let $(z_n)_{n\geq 1}$ be any sequence in $Z$ leaving any finite subset, viewed as a sequence in $\Gamma$. Then $\Lambda^{z_n}$ converges to the trivial subgroup in the Chabauty space because $\Lambda^{z_n}=\Lambda_{z_n\inv }$, while any given element of $\bigoplus_{z\in Z} \Lambda_z$ has finite support in $Z$.

Condition~\eqref{pt:co-amen} is immediate if $Z$ is non-amenable, because if $\Lambda$ were co-amenable in $\Gamma$, then so would be the normal subgroup $\bigoplus_{z\in Z} \Lambda_z$, which is equivalent to the quotient $Z$ being amenable.

However, condition~\eqref{pt:co-amen} does indeed hold more generally as soon as $Z$ is non-trivial, e.g.\ $Z=\ZZ$. Suppose indeed for a contradiction that $\mu$ is a $\Gamma$-invariant mean on $\Gamma/\Lambda$. Consider a general element $\gamma\in \Gamma$ with coordinates $\gamma=((\lambda_z)_{z\in Z}, x)$. Given $y\in Z$, the stabiliser in $\Lambda_y$ of the point $\gamma \Lambda \in \Gamma/\Lambda$ is $\Lambda_y\cap \gamma \Lambda_e \gamma\inv$. On the other hand, $\gamma \Lambda_e \gamma\inv = \Lambda_x$ and therefore this stabiliser is trivial whenever $x\neq y$. In conclusion, all orbits of the $\Lambda_y$-action on $\Gamma/\Lambda$ are regular orbits except the orbits of the points $\gamma \Lambda$ where $\gamma=((\lambda_z)_{z\in Z}, y)$. Since $\mu$ is $\Lambda_y$-invariant and $\Lambda_y\cong \Lambda$ is non-amenable, $\mu$ is supported on the complement of the union of regular $\Lambda_y$-orbits. That is, $\mu$ is supported on the union of the orbits of the form $(*, y)\Lambda$. Applying the same argument to any $y'\neq y$, which exists since $Z$ is non-trivial, leads to a contradiction.

\medskip
We observe that when $Z=\ZZ$, this non-co-amenability contrasts with the co-amenability of $\bigoplus_{n\geq 0} \Lambda_z$ in $\Lambda \wr \ZZ$, see~\cite{Monod-Popa}.

\bigskip

For our final comment, we do consider topological groups to point out another source of exotic quasi-regular algebras but in the familiar context of Lie groups (which then of course will have non-abelian free subgroups). 

Let $G$ be a center-free non-compact connected simple Lie group of rank at least two. Let $\Lambda<G$ be any discrete subgroup which is not virtually soluble and not Zariski dense. Then we claim that $\cstar(\lambda_{G/\Lambda})$ is exotic.

The assumptions above are satisfied when $\Lambda<H<G$ is contained in proper connected Lie subgroup $H<G$ and is not virtually soluble. For instance, given any $2\leq n < m$ we can consider $\Lambda=\SL_n(\ZZ)$ and $G=\SL_m(\RR)$. Here $H$ is witnessed by $\SL_n(\RR)$.

In that example, $\cstar(\lambda_{\SL_m(\RR) / \SL_n(\ZZ)})$ is an exotic algebra for the \emph{Type~I} group $\SL_m(\RR)$. This cannot happen for discrete groups, since by Thoma's theorem~\cite{Thoma68} a discrete Type~I group is in particular amenable.

\smallskip

The proof of the exoticism claim follows again the same lines, but adapted to the setting of locally compact groups. The fact that $\Lambda$ is non-amenable follows from Tits's alternative~\cite{Tits72}. That it is not co-amenable follows from Zariski non-density by a result of Shalom~\cite[Cor.~1.7]{Shalom99}. Finally, the unconfinment was proved by Fraczyk--Gelander~\cite[Thm.~1.5]{Fraczyk-Gelander} assuming that $\Lambda$ is not a lattice, which in our case follows from Zariski non-density by Borel's density theorem \cite{Borel60}. These representations are again far from irreducible, in contrast to the Theorem.

\subsection*{Acknowledgements}
We are grateful to Mehrdad Kalantar and to Tim de Laat for their comments on the context of exotic algebras.

During this research M.G. was supported by the DFG -- Project-ID 427320536 -- SFB 1442, and under Germany's Excellence Strategy EXC 2044 390685587, Mathematics Münster: Dynamics--Geometry--Structure.


\bibliographystyle{amsalpha}
\bibliography{../BIB/ma_bib}

\end{document}